\newtheorem{thm}{Theorem}[section]
\newtheorem*{thm*}{Theorem}
\newtheorem{cor}[thm]{Corollary}
\newtheorem*{cor*}{Corollary}
\newtheorem{lem}[thm]{Lemma}
\newtheorem*{lem*}{Lemma}
\newtheorem{prop}[thm]{Proposition}
\newtheorem*{prop*}{Proposition}
\theoremstyle{definition}
\newtheorem{defn}{Definition}[section]
\newtheorem*{defn*}{Definition}
\theoremstyle{remark}
\newtheorem*{rem*}{Remark}
\newtheorem*{problem*}{Problem}
\newcommand{\QQ}{\mathbb Q}
\newcommand{\RR}{\mathbb R}
\newcommand{\cC}{\mathcal C}
\newcommand{\Z}{\mathbb Z}
\newcommand{\cP}{\mathcal P}
\newcommand{\ev}{\mathrm{ev}}
\newcommand{\HH}{\mathrm {HH}}
\newcommand{\HHH}{\mathrm {HHH}}
\newcommand{\eps}{\epsilon}
\newcommand{\Hom}{\mathrm{Hom}}
\DeclareMathOperator{\Ext}{Ext}
\DeclareMathOperator{\area}{area}
\newcommand{\Bim}{\mathrm{Bim}}
\newcommand{\Br}{\mathrm{Br}}
\title{Homology of torus knots}
\author{Anton Mellit}
\email{mellit@gmail.com}
\address{Institute of Science and Technology, \\
Am Campus 1, 3400 Klosterneuburg, Austria}
\begin{document}
\onehalfspacing

\begin{abstract}
Using the method of Elias-Hogancamp and combinatorics of toric braids
we give an explicit formula for the triply graded Khovanov-Rozansky homology of an arbitrary torus knot, thereby proving some of the conjectures of Aganagic-Shakirov, Cherednik, Gorsky-Negut and Oblomkov-Rasmussen-Shende.
\end{abstract}

\maketitle

%\section{Introduction}
\section{Elias-Hogancamp recursions}

\subsection{Khovanov-Rozansky homology}
We begin by recalling the construction of Khovanov-Rozansky homology using Soergel bimodules from \cite{khovanov2007triply}. Our notations are close to the one of \cite{elias2016computation}, except that $q$ and $t$ are interchanged and the sign of $a$ is flipped. Let $R_n$ denote the ring of polynomials in $n$ variables,
\[
R_n = \QQ[x_1, x_2, \ldots,x_n].
\]
We will omit $n$ when it is clear. Let $R^{S}$ be the subring of symmetric polynomials. Denote by $\Bim_n$ the category of graded modules over $R\otimes_{R^{S}} R$ which are free both as left and right $R$-modules. These modules will be called \emph{bimodules}. We denote the functor of shifting the degrees by $t$ so that if any expression $x$ has degree $i$ then $tx$ has degree $i+1$. The dg-category of bounded complexes of bimodules is denoted $\cC(\Bim_n)$. If $x$ is a complex, then $-(qt)^{-\frac12}x$ stands for a complex whose homological\footnote{What the authors of \cite{elias2016computation} call \emph{homological} degree should be called \emph{cohomological} degree} degrees are increased by $1$. $\HH$ stands for the Hochshild cohomology functor $M\to \Ext^\bullet_{R\otimes_{R^S} R}(R, M)$. When applied to an object of $\Bim_n$ it produces a graded object in $\Bim_n$, thus a doubly graded bimodule. The degree shifts with respect to this new grading will be denoted by $-at$. When we apply $\HH$ to a complex $x\in \cC(\Bim_n)$, we obtain a complex of doubly graded vector spaces. In the dg-category of vector spaces every object is equivalent to its homology, so the corresponding triply graded vector space will be denoted by $\HHH(x)$. Counting the dimensions of graded pieces produces an infinite series in $a$, $t$, $\sqrt{qt}$ and their inverses, which we will also denote by $\HHH(x)$.

We will denote the left action of the generators of $R$ on a bimodule by $x_1, x_2, \ldots$ and the right action by $x_1', x_2', \ldots$.

To get accustomed to the notations we evaluate $\HH(R_n)$ and $\HHH(R_n)$. A free resolution of $R$ over $R\otimes_{R^{S}} R$ can be given by a complex of the form 
\[
\cdots \to t V\otimes R\otimes_{R^{S}} R \to R\otimes_{R^{S}} R,
\]
where $V=\bigoplus_{i=1}^n \QQ e_i / \sum_{i=1}^n e_i$ is a vector space of dimension $n-1$. The last differential sends $e_i$ to $x_i - x_i'$. This increases the degree by $1$, therefore we need the factor of $t$. The $k$-th term in the complex is $t^k \Lambda^k V R\otimes_{R^{S}} R$. Applying $\Hom(-,R)$ produces a complex whose differentials are $0$, so we obtain
\[
\HH(R_n) = \bigoplus_{i=0}^{n-1} \binom{n-1}{i} t^{-k} (-a t)^k R_n = (1-a)^{n-1} R_n.
\]
Therefore
\[
\HHH(R_n) = \frac{(1-a)^{n-1}}{(1-t)^n}.
\]

Denote by $B_i$ the \emph{Bott-Samelson bimodule}
\[
B_i = R\otimes_{R^{\sigma_i}} R,
\]
where $R^{\sigma_i}$ denotes the ring of polynomials invariant under $\sigma_i$, which interchanges the $i$-th and $i+1$-st variables. Then form the \emph{Rouquier complex}
\footnote{Our complexes are slightly different from the standard ones. To obtain the standard complexes simply multiply ours by $q^{\frac12}$. This change is convenient because we never have to deal with $q^{\frac12}$, $t^{\frac12}$ separately, they always come in pairs as in $(qt)^\frac12$, those monomials that have $(qt)^\frac12$ correspond to odd homology, while those monomials without it correspond to even homology.}
\[
T_i = \left(B_i \to \underline{R}\right).
\]
Here when writing a complex we underline the part in homological degree $0$. In the tensor category $\cC(\Bim_n)$ this complex turns out to be invertible with inverse given by
\[
T_i^{-1} = \left(\underline{R}\to t^{-1} B_i\right).
\]
The map $R\to t^{-1}B_i$ corresponds to a degree $1$ map $R\to B_i$, which is given by sending $1\in R$ to $x_i-x_{i+1}'$. This is indeed a map of bimodules because $R$ is generated over $R^{\sigma_i}$ by $x_{i}$, so it is enough to check
\[
x_i(x_i - x_{i+1}') - x_i'(x_i - x_{i+1}')=(x_i-x_i')(x_i - x_{i+1}')=0 \quad\text{in $B_i$.}
\]
The tensor product of $T_i$ and $T_i^{-1}$ looks as follows:
\[
B_i \to \underline{R\oplus t^{-1}B_i\otimes_{R_n} B_i} \to t^{-1} B_i.
\]
It is easy to see that $B_i\otimes_R B_i\cong B_i\oplus t B_i$ and the complex above splits into $R$ and two homotopically trivial complexes $B_i\to B_i$ and $t^{-1} B_i \to t^{-1} B_i$.

It turns out that this construction gives a homomorphism from the Artin braid group on $n$ strands, denoted $\Br_n$, to $\cC(\Bim_n)$ by sending the $i$-th standard generator of $\Br_n$ to $T_i$. It is convenient to identify a braid with its image in $\cC(\Bim_n)$. Furthermore, $\HHH(\beta)$ is almost an invariant of the link $[\beta]$ obtained by closing $\beta$. The link invariant $\cP(\beta)$ is given by
\[
\cP(\beta) = \left(a q^\frac12 t^{-\frac12}\right)^{\chi(\beta)} \HHH(\beta),\quad \chi(\beta)=\frac{e(\beta)-n+c(\beta)}2
\]
where $e(\beta)$ is the sum of the exponents of the braid group generators in $\beta$, $c(\beta)$ is the number of components of the link $[\beta]$. Note that $\chi(\beta)$ is always an integer.

 For instance, for $n=2$ we have
\[
\HH^1(T_1) = (t^{-1} R \xrightarrow{\sim} \underline{t^{-1} R}) \cong 0,\quad
\HH^0(T_1) = (t R \xrightarrow{x_1-x_2} \underline{R}),
\]
\[
\HH^1(T_1^{-1}) = (t^{-1} R \xrightarrow{x_1-x_2} \underline{t^{-2} R}),\quad
\HH^0(T_1^{-1}) = (R \xrightarrow{\sim} \underline{R})\cong 0,
\]
so
\[
\HHH(T_1)=\frac{1}{1-t}=\HHH(R_1),\quad \HHH(T_1^{-1})=\frac{aq^{\frac12} t^{-\frac12}}{1-t},
\]
\[
\cP(T_1)=\cP(T_1^{-1})=\frac{1}{1-t},
\]
as expected.

%It is clear that for any braid $\beta$ the series $\HHH(\beta)$ is a polynomial in $a$, $q$, $q^{-1}$, $\sqrt{qt}$ with coefficients Laurent series in $t$.

\subsection{Categorified symmetrizer} In \cite{hogancamp2015categorified} a categorification of the Jones-Wenzl idempotents is constructed. Following \cite{elias2016computation} we prefer to deal with its 'multiple'. For $1\leq l\leq n$ the complexes $K_l\in\cC(\Bim_n)$ are defined recursively by 
\[
K_1 = R, \qquad K_{l+1} = (t K_l \xrightarrow{f_l} \underline{ q^l T_l T_{l-1} \cdots T_1 T_1 \cdots T_{l-1} T_l K_l}),
\]
for certain morphisms $f_l$. Then it is shown that
\begin{enumerate}
\item $K_l T_i \cong T_i K_l \cong K_l$ for all $i<l$,
\item $\HHH((1\sqcup \beta)K_l) = (q^{l-1}-a) \HHH(\beta K_{l-1})$ for any braid $\beta\in\Br_{n-1}$.
\end{enumerate}
Note that the factor $q^l$ takes care of the difference between our Rouquier complexes and the standard ones. So our $K_l$ agree with the ones given in \cite{elias2016computation}. This complexes are used as follows. Let $F_\ev$ be the space of series that depend only on integer powers of $q$, $t$. These correspond to complexes with only even homology.
\begin{lem}\label{lem:reduction1}
Suppose $\HHH(\beta K_{l+1}), \HHH(\beta K_l)\in F_\ev$  for some braid $\beta\in\Br_{n}$. Then we have
\[
\HHH(\beta T_l T_{l-1} \cdots T_1 T_1 \cdots T_{l-1} T_l K_l) = q^{-l}\left(t\, \HHH(\beta K_l) + \HHH(\beta K_{l+1})\right).
\]
\end{lem}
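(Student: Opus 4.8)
The statement relates $\HHH$ of $\beta$ times the "big" product $T_l T_{l-1}\cdots T_1 T_1\cdots T_{l-1}T_l$ applied to $K_l$ to a combination of $\HHH(\beta K_l)$ and $\HHH(\beta K_{l+1})$. The obvious entry point is the recursive definition $K_{l+1} = (tK_l \xrightarrow{f_l} q^l T_l\cdots T_1 T_1\cdots T_l K_l)$, which exhibits $K_{l+1}$ as the cone of the map $f_l$ shifted appropriately. Let me write $P_l = T_l T_{l-1}\cdots T_1 T_1\cdots T_{l-1}T_l$ for brevity.

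First I would tensor the defining two-term complex for $K_{l+1}$ with $\beta$ on the left, which is exact and preserves the cone structure: $\beta K_{l+1}$ is the cone of $\beta f_l : t\beta K_l \to q^l \beta P_l K_l$ (with the target placed in homological degree $0$). Applying $\HH$ and then passing to homology is a triangulated functor on the homotopy category of complexes of doubly graded bimodules, so we get a long exact sequence relating $\HHH(\beta K_{l+1})$, $\HHH(t\beta K_l) = t\,\HHH(\beta K_l)$, and $\HHH(q^l\beta P_l K_l) = q^l\,\HHH(\beta P_l K_l)$. The key point is to argue this long exact sequence degenerates into a short exact sequence, giving the additive relation $q^l \HHH(\beta P_l K_l) = t\,\HHH(\beta K_l) + \HHH(\beta K_{l+1})$, which is exactly the claimed formula after dividing by $q^l$.

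The degeneration is where the hypothesis $\HHH(\beta K_{l+1}), \HHH(\beta K_l)\in F_\ev$ enters, and this is the main obstacle to get right. The cone long exact sequence has connecting maps that shift the homological (cohomological) degree, and — because of the sign-and-grading conventions set up in the excerpt — that shift carries a factor of $(qt)^{\pm 1/2}$, i.e.\ it sends even-parity classes to odd-parity classes and vice versa. Since both $\HHH(\beta K_l)$ and $\HHH(\beta K_{l+1})$ lie in $F_{\ev}$ (purely even parity), any map between them induced by a connecting homomorphism must vanish: the source and target live in opposite parities after the degree shift. Therefore the long exact sequence breaks up, $\HHH(\beta P_l K_l)$ also lies in $F_\ev$, and we obtain the short exact sequence $0 \to \HHH(\beta K_{l+1}) \to q^l\HHH(\beta P_l K_l) \to t\,\HHH(\beta K_l)\to 0$ of graded vector spaces; counting dimensions gives the stated identity of power series.

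The one technical check I would not skip is that $\beta f_l$ genuinely realizes $\beta K_{l+1}$ as a mapping cone in the homotopy category in a way compatible with $\HH$ — i.e.\ that tensoring with the Rouquier complex $\beta$ on the left and applying the Hochschild cohomology functor sends the distinguished triangle $t\beta K_l \to q^l\beta P_l K_l \to \beta K_{l+1} \to$ to a distinguished triangle of complexes of doubly graded vector spaces, so that the long exact sequence on $\HHH$ is available. This is formal once one knows $\HH$ is a (co)homological functor and $-\otimes\beta$ preserves cones, both of which are standard for Soergel bimodules; I would state it as such and cite the setup from \cite{khovanov2007triply, elias2016computation}.
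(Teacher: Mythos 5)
The paper does not actually spell out a proof of Lemma~\ref{lem:reduction1} (it is presented as a consequence of the recursion for $K_l$ imported from \cite{hogancamp2015categorified,elias2016computation}), so there is no written proof to compare against. That said, your argument is exactly the intended one, and it is correct: the defining two-term complex $K_{l+1} = (t K_l \xrightarrow{f_l} \underline{q^l T_l\cdots T_1T_1\cdots T_l K_l})$ exhibits $K_{l+1}$ as the cone of $f_l$; tensoring on the left by $\beta$ and applying $\HH$ preserves the distinguished triangle, yielding a long exact sequence in which the connecting map shifts homological degree by one (a factor of $(qt)^{\pm 1/2}$ in the Poincar\'e series). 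The hypothesis that both $\HHH(\beta K_l)$ and $\HHH(\beta K_{l+1})$ lie in $F_{\ev}$ forces the connecting homomorphisms to vanish for parity reasons, so the LES splits into short exact sequences in each degree, and counting dimensions gives $q^l\HHH(\beta T_l\cdots T_1T_1\cdots T_l K_l) = t\,\HHH(\beta K_l) + \HHH(\beta K_{l+1})$, as claimed. The one small imprecision is the phrase that the connecting maps are ``maps between them'': they are maps from $\HHH(\beta K_{l+1})$ to a shift of $\HHH(\beta K_l)$, not between the two series as such; but the parity argument you give is exactly the right reason they vanish, so this is only a wording quibble.
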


In fact, this can be strengthened as follows. For each integer $m$ denote by $F_{\geq m}$ the space of power series containing only terms $q^i t^j a^m$ with $j\geq m$.
\begin{lem}\label{lem:reduction2}
Suppose $\HHH(\beta K_{l+1})\in F_{\geq m} + F_\ev$ and $\HHH(\beta K_l)\in F_{\geq m-1} + F_\ev$. Then we have
\[
\HHH(\beta T_l T_{l-1} \cdots T_1 T_1 \cdots T_{l-1} T_l K_l) = q^{-l}\left(t\, \HHH(\beta K_l) + \HHH(\beta K_{l+1})\right) \pmod {F_{\geq m}}.
\]
\end{lem}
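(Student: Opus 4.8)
The plan is to adapt the proof of Lemma \ref{lem:reduction1}, keeping track of the $a$-degree and $t$-degree of the error terms. Recall that the defining triangle $K_{l+1} = (t K_l \xrightarrow{f_l} q^l T_l T_{l-1}\cdots T_1 T_1 \cdots T_{l-1} T_l K_l)$ yields, after applying $\HHH(\beta\,-\,)$, a long exact sequence relating the three triply graded vector spaces $\HHH(\beta K_{l+1})$, $t\,\HHH(\beta K_l)$, and $q^l \HHH(\beta T_l\cdots T_1 T_1 \cdots T_l K_l)$. If all connecting maps vanished we would immediately get the displayed identity without any error term; the content of Lemma \ref{lem:reduction1} is that they \emph{do} vanish when everything lives in $F_\ev$, by a parity obstruction (the connecting map shifts parity, so it cannot be an isomorphism between two spaces that are both purely even). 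Here we only know the spaces lie in $F_{\geq m}+F_\ev$ and $F_{\geq m-1}+F_\ev$, so the connecting maps can be nonzero, but their image is constrained.

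First I would write $\HHH(\beta K_{l+1}) = P_{l+1} + E_{l+1}$ with $P_{l+1}\in F_\ev$ and $E_{l+1}\in F_{\geq m}$, and similarly $\HHH(\beta K_l) = P_l + E_l$ with $P_l\in F_\ev$, $E_l\in F_{\geq m-1}$. The even parts $P_l, P_{l+1}$ contribute exactly as in Lemma \ref{lem:reduction1}: the parity argument shows the connecting maps are zero on those summands, so they contribute $q^{-l}(t P_l + P_{l+1})$ to $\HHH(\beta T_l\cdots T_l K_l)$ with no correction. It remains to control the contribution of the error summands $E_l, E_{l+1}$. The term $t E_l$ lies in $F_{\geq m}$ (multiplying by $t$ only raises the $t$-degree), and $E_{l+1}$ lies in $F_{\geq m}$ by hypothesis; so modulo the long exact sequence's connecting maps, the error summands contribute something of the form $q^{-l}(t E_l + E_{l+1}) + (\text{connecting map corrections})$. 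The key point is that everything in sight — $q^{-l} t E_l$, $q^{-l} E_{l+1}$, and the images/kernels cut out by the connecting maps — involves only monomials $q^i t^j a^m$ with $j \geq m$, hence lies in $F_{\geq m}$. Thus $\HHH(\beta T_l \cdots T_l K_l) - q^{-l}(t\,\HHH(\beta K_l) + \HHH(\beta K_{l+1})) \in F_{\geq m}$, which is exactly the claimed congruence.

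The main obstacle is to make precise why the connecting map corrections stay inside $F_{\geq m}$ rather than producing lower-$t$-degree terms in $a$-degree $m$. The cleanest way is probably to argue that the pieces of $\HHH$ in $a$-degree $m$ with $t$-degree $< m$ are, by hypothesis, entirely contained in the even part $F_\ev$ for both $K_l$ and $K_{l+1}$ (that is how $F_{\geq m}+F_\ev$ should be parsed: any ``bad'' low-$t$-degree monomial in $a$-degree $m$ must already be even), and then the parity obstruction applied to those monomials forces the connecting maps to vanish on them, just as in Lemma \ref{lem:reduction1}. Equivalently, one decomposes the long exact sequence degree by degree in $(a,q,t)$ and observes that a connecting homomorphism between a pair of one-dimensional pieces in consecutive homological degrees is an isomorphism only if the parities match, which by the hypothesis can happen only when the $t$-degree is $\geq m$ in $a$-degree $m$. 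Once this bookkeeping is set up, the rest is the same splitting argument as before, and the conclusion follows $\pmod{F_{\geq m}}$.
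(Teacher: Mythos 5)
The paper does not give a proof of Lemma~\ref{lem:reduction2} (nor of Lemma~\ref{lem:reduction1}); both are stated as consequences of the Elias--Hogancamp properties of the $K_l$'s, so there is no written argument to compare against, and I will assess your argument on its own.

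Your starting point is correct: the cone $K_{l+1}=\bigl(tK_l\to \underline{q^lT_l\cdots T_1T_1\cdots T_lK_l}\bigr)$ produces, after tensoring with $\beta$ and applying $\HHH$, a long exact sequence, and the entire discrepancy in the asserted identity is $(1+(qt)^{\pm1/2})$ times the graded rank of the connecting homomorphism $\partial$, which shifts the $(qt)^{1/2}$-grading by $\pm\frac12$ while preserving the $a$-grading. That is the right skeleton. But there are concrete problems in the middle and end of your write-up.

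First, it is \emph{not} true that ``the parity argument shows the connecting maps are zero on $P_l,P_{l+1}$''. The connecting map flips parity, so $\partial$ carries the even part $P_{l+1}$ of $\HHH(\beta K_{l+1})$ into the \emph{odd} part of (a shift of) $t\HHH(\beta K_l)$, i.e.\ into $tE_l$, which the hypotheses do not assume to vanish; symmetrically it can carry $E_{l+1}$ into $tP_l$. In Lemma~\ref{lem:reduction1} the vanishing holds because \emph{both} sides are entirely even and the odd target is zero; here only one of the two spaces is even in any given degree, so $\partial$ need not vanish, and the even parts do not ``contribute with no correction''. Relatedly, $q^{-l}(tE_l+E_{l+1})$ is not an error term at all --- it appears identically on both sides of the claimed identity --- so this decomposition is a distraction: the only thing you need to control is the connecting-map rank polynomial, exactly the issue you defer to the last paragraph.

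Second, the last paragraph identifies the real obstacle but states the resolution wrongly as ``the parity obstruction forces the connecting maps to vanish on them''. The correct and clean statement is a degree bound, not a vanishing: for $\partial$ to be nonzero in a given $(q,t,a)$-degree, both its source in $\HHH(\beta K_{l+1})$ and its target in the $(qt)^{\pm1/2}$-shift of $t\HHH(\beta K_l)$ must be nonzero; since $\partial$ flips parity, at least one of these sits in the complement of $F_\ev$; by hypothesis the odd part of $\HHH(\beta K_{l+1})$ lies in $F_{\geq m}$, and the odd part of $t\HHH(\beta K_l)$ has $t$-exponent $\geq (m-1)+1=m$; in either case the constraining $t$-exponent is a half-integer $\geq m$, hence $\geq m+\frac12$, so after the $(qt)^{\pm1/2}$ shift it is still $\geq m$. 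Therefore the full discrepancy $(1+(qt)^{\pm1/2})\cdot(\text{rank of }\partial)$ lies in $F_{\geq m}$.

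Finally, a caveat you gloss over: $F_{\geq m}$ as defined in the paper restricts to $a$-exponent exactly $m$. If read that literally, $tF_{\geq m-1}$ sits in $a$-degree $m-1$ and is \emph{not} contained in $F_{\geq m}$, and the $\partial$-contribution from $P_{l+1}$ hitting $tE_l$ produces discrepancy terms in $a$-degree $m-1$, outside $F_{\geq m}$. Your line ``multiplying by $t$ only raises the $t$-degree'' tacitly assumes $F_{\geq m}$ carries no $a$-constraint. That reading makes the lemma and the above argument go through; with the literal reading the statement appears to need amending. You should say explicitly which reading you are using, and in either case replace the vanishing claims with the degree bound above.
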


\section{The main construction}
\subsection{From coloring to a complex}
Next we explain how to organize the identities into a recursion to compute $\HHH$ of torus knots and links. The state of the recursion is precisely the \emph{admissible coloring} of a line from \cite{mellit2016toric}:
\begin{defn}\label{defn:colorings}
Let $h\in\RR, s\in\RR_{>0}$ be such that $s\notin\QQ$ and the line $l_h:=\{x,y:y=s x + h\}$ does not pass through a lattice point. An \emph{admissible coloring} of $l_h$ is a finite family of non-overlapping intervals $[a_i, b_i]\subset l_h$ with $x(a_i)<x(b_i)$ such that each $a_i$ belongs to a vertical grid line, and each $b_i$ belongs to a horizontal grid line, i.e. $x(a_i)\in\Z$ and $y(b_i)\in\Z$ for all $i$.
\end{defn}

To an admissible coloring we associate a complex in $\cC(\Bim_{N(c)})$ as follows. Denote the number of the intervals of a coloring $c$ by $k(c)$ and the endpoints of the intervals by $a_i(c), b_i(c)$. The number of strands $N(c)$ will be the sum of $k(c)$ and the number of times an interval crosses a vertical lattice, i.e.
\[
N(c) = k(c) + \sum_{i=1}^{k(c)} \lfloor y(b_i(c))-y(a_i(c)) \rfloor.
\]
In the beginning we put the symmetrizer $K_k$. Projecting the intervals to the unit square we obtain a collection of non-intersecting intervals in the unit square with endpoints on the sides. We put the square below and to the right of the symmetrizer and explain how to connect the endpoints of the intervals to form a braid. The endpoints of our intervals on the top side of the square that are \emph{not} $b_i$ simply extend vertically to the top of the picture. The endpoints that are $b_i$ connect to the symmetrizer passing below the other strands. The endpoints on the bottom side extend vertically to the bottom of the picture. The endpoints on the left side that are $a_i$ extend to the left and connect to the bottom of the picture. The remaining endpoints on the left side connect to the corresponding endpoints on the right side passing below the other strands. Note that all crossings in the resulting braid are positive.

For example, consider the coloring on Figure \ref{fig:example coloring}. It has $3$ intervals. The resulting complex is shown to the right. On the picture we do not show the strands that go from the left side of the square to the right side. They are below the square, so we may think that the square is an opaque piece of paper, and the strands wrap around it.

Our construction may look strange, but in fact it is obtained by drawing the strands on the punctured torus embedded in $\RR^3$, placing the symmetrizer in the puncture and connecting the ends. Then the resulting configuration in $\RR^3$ is projected to the plane.

\begin{figure}
\def\eps{0.1}
\def\s{(4/7)}
\def\t{(1/(1+\s))}
\def\m{7}
\def\n{4}
\def\h{(\n-(\m-\eps)*\s)}
\begin{tikzpicture}[scale=0.7]
\draw[gray, step=1, thin] (0,0) grid (\m,\n);
\draw[gray,thin,->] (0, 0) -- (\m+0.5, 0) node[black,right] {$x$};
\draw[gray,thin,->] (0, 0) -- (0, \n+0.5) node[black,above] {$y$};
\draw (0, {\h}) -- ({(1-\h) / \s},1);
\draw (2, {\h+2*\s}) -- ({(3-\h) / \s},3);
\draw (6, {\h+6*\s}) -- ({(4-\h) / \s},4);
\end{tikzpicture}
\qquad
\def\eps{0.1}
\def\s{(4/7)}
\def\t{(1/(1+\s))}
\def\m{7}
\def\n{4}
\def\h{(\n-(\m-\eps)*\s)}
\begin{tikzpicture}[scale=2]
\draw (-1,1.9) rectangle (-0.5,1.7);
\draw (-0.75,1.8) node {$K_3$};
\draw (0,0) rectangle (1,1);

%\draw ({(\t)*(1-\h)}, {1-(\t)*(1-\h)}) node[left]{\tiny finish} node[draw,circle, inner sep=1pt,fill] {} -- (1, {\h+\s});
\draw (0, {\h})node[draw,circle, inner sep=1pt,fill] {} -- (1, {\h+\s});
\draw (0, {\h+\s}) -- ({1/\s*(1-\h)-1}, 1) node[draw,circle, inner sep=1pt,fill] {};
\draw (0, {\h+2*\s-1})node[draw,circle, inner sep=1pt,fill] {} -- (1, {\h+3*\s-1});
\draw (0, {\h+3*\s-1}) -- ({-3 +(2-\h)/\s},1);
\draw (({-3 +(2-\h)/\s},0) -- (1, {\h+4*\s-2});
\draw (0,{\h+4*\s-2}) --(1,{\h+5*\s-2});
\draw (0,{\h+5*\s-2}) -- ({-5 +(3-\h)/\s},1)node[draw,circle, inner sep=1pt,fill] {};
\draw (0,{\h+6*\s-3})node[draw,circle, inner sep=1pt,fill] {} -- ({-6 +(4-\h)/\s},1)node[draw,circle, inner sep=1pt,fill] {};
\begin{knot}[clip width=4]
\strand (-0.75,1.7) to [out=-90,in=90,looseness=0.6] ({{1/\s*(1-\h)-1}}, 1);
\strand (-0.9,1.7) to [out=-90,in=90,looseness=0.6] ({{-5 +(3-\h)/\s}},1);
\strand (-0.6,1.7) to [out=-90,in=90,looseness=0.6] ({{-6 +(4-\h)/\s}},1);
\strand ({-3 +(2-\h)/\s}, 1) -- ({-3 +(2-\h)/\s}, 2);
\strand ({-3 +(2-\h)/\s}, 0) -- ({-3 +(2-\h)/\s}, -0.1);
\strand (-0.75,1.9) -- (-0.75,2);
\strand (-0.9,1.9) -- (-0.9,2);
\strand (-0.6,1.9) -- (-0.6,2);
\strand (-0.6,-0.1) to [out=90,in=-180,looseness=0.6] (0, {\h});
\strand (-0.75,-0.1) to [out=90,in=-180,looseness=0.6] (0, {\h+2*\s-1});
\strand (-0.9,-0.1) to [out=90,in=-180,looseness=0.6] (0,{\h+6*\s-3});
\flipcrossings{1,2}
\end{knot}
\end{tikzpicture}
\caption{A coloring and the corresponding complex}
\label{fig:example coloring}
\end{figure}
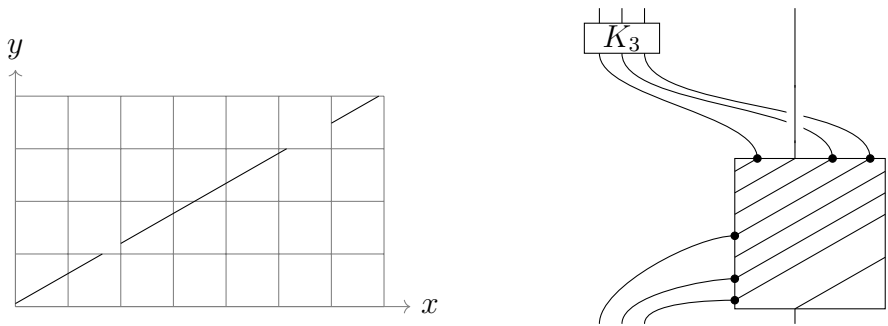

\subsection{Transformation rules}
Next we study what happens when we move the line $l_h$ upwards increasing $h$. The possibilities are precisely the ones that appeared in \cite{mellit2016toric}. In the following table the colorings are denoted by $c$, $c'$, $c''$. We write $\HHH(c)$, $\HHH(c')$, $\HHH(c'')$ for the $\HHH$ of the corresponding complexes. The resulting transformation rules are given on Figure \ref{fig:transformation}. On the diagrams, the intervals of a coloring are represented by the intersection of the shaded region with the corresponding dashed line. For instance, if we start with our example on Figure \ref{fig:example coloring}, we will have to apply rule D) first. So $\HHH$ of the original complex is equal to the $\HHH$ of the complex on Figure \ref{fig:example step2}. Then we are in the situation to apply rule BE), so our computation is reduced to the computation of $\HHH$ of two complexes.

The rule BE) is conditional in the sense that it can be applied only if we know that $\HHH(c')$ and $\HHH(c'')$ are in $F_\ev$ (see Lemma \ref{lem:reduction1}). It is clear however that starting from any coloring our process terminates, so all the intermediate values of $\HHH$ will be computed, and all will be in $F_\ev$.
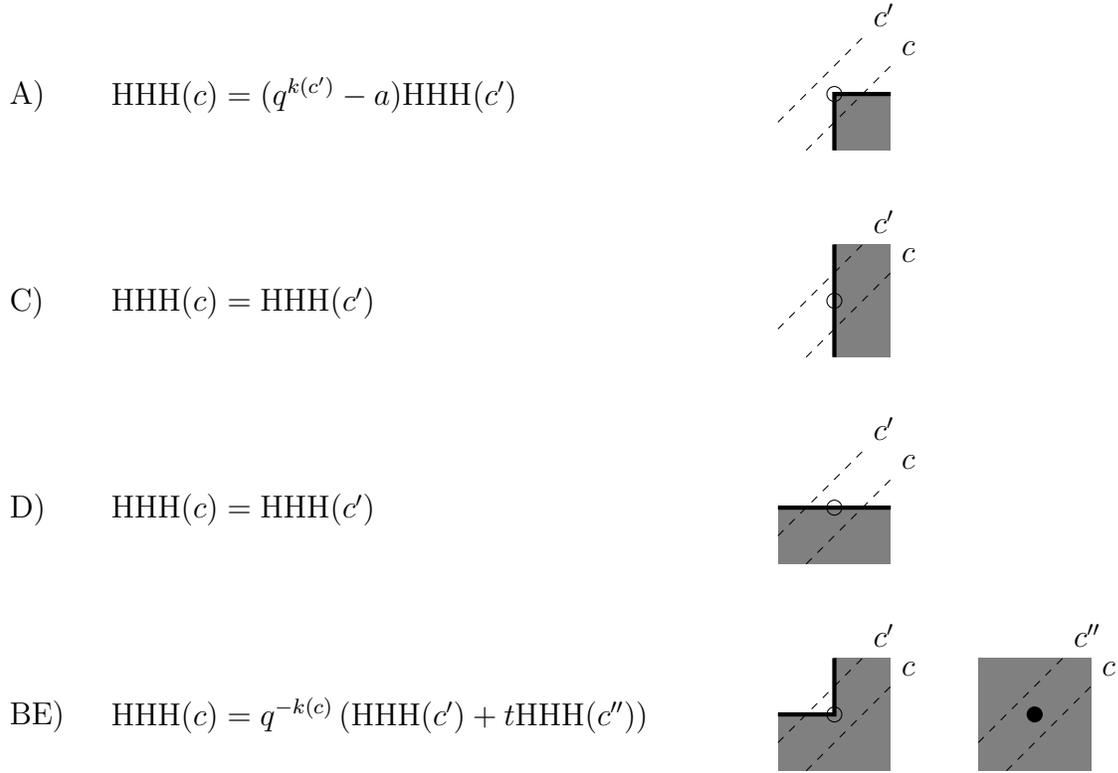
\begin{figure}
\begin{tabular}{m{1cm} m{8cm} m{6cm}}
A) & 
$\HHH(c) = (q^{k(c')}-a)\HHH(c')$ &
\begin{tikzpicture}[scale=0.75]
\fill[gray] (0,-1) rectangle (1,0);
\draw[black,ultra thick](0,-1) -- (0,0) -- (1,0);
\draw (0,0) node[draw,circle, inner sep=2pt] {};
\draw[dashed](-0.5,-1)--(1, 0.5) node[above right] {$c$};
\draw[dashed](-1,-0.5)--(0.5, 1) node[above right] {$c'$};
\draw (-1.5, -1.5) node {};
\draw (1.5, 1.5) node {};
\end{tikzpicture}
\\
C) & 
$\HHH(c) = \HHH(c')$ &
\begin{tikzpicture}[scale=0.75]
\fill[gray] (0,-1) rectangle (1,0);
\fill[gray] (0,0) rectangle (1,1);
\draw[black,ultra thick](0,-1) -- (0,0) -- (0,1);
\draw (0,0) node[draw,circle, inner sep=2pt] {};
\draw[dashed](-0.5,-1)--(1, 0.5) node[above right] {$c$};
\draw[dashed](-1,-0.5)--(0.5, 1) node[above right] {$c'$};
\draw (-1.5, -1.5) node {};
\draw (1.5, 1.5) node {};
\end{tikzpicture}
\\
D) & 
$\HHH(c) = \HHH(c')$ &
\begin{tikzpicture}[scale=0.75]
\fill[gray] (-1,-1) rectangle (0,0);
\fill[gray] (0,-1) rectangle (1,0);
\draw[black,ultra thick](-1,0) -- (0,0) -- (1,0);
\draw (0,0) node[draw,circle, inner sep=2pt] {};
\draw[dashed](-0.5,-1)--(1, 0.5) node[above right] {$c$};
\draw[dashed](-1,-0.5)--(0.5, 1) node[above right] {$c'$};
\draw (-1.5, -1.5) node {};
\draw (1.5, 1.5) node {};
\end{tikzpicture}
\\
BE) & 
$\HHH(c) = q^{-k(c)}\left(\HHH(c')+t\HHH(c'')\right)$ &
\begin{tikzpicture}[scale=0.75]
\fill[gray] (-1,-1) rectangle (0,0);
\fill[gray] (0,-1) rectangle (1,0);
\fill[gray] (0,0) rectangle (1,1);
\draw[black,ultra thick](-1,0) -- (0,0) -- (0,1);
\draw (0,0) node[draw,circle, inner sep=2pt] {};
\draw[dashed](-0.5,-1)--(1, 0.5) node[above right] {$c$};
\draw[dashed](-1,-0.5)--(0.5, 1) node[above right] {$c'$};
\draw (-1.5, -1.5) node {};
\draw (1.5, 1.5) node {};
\end{tikzpicture}
\begin{tikzpicture}[scale=0.75]
\fill[gray] (-1,-1) rectangle (1,1);
\draw (0,0) node[draw,circle, inner sep=2pt,fill] {};
\draw[dashed](-0.5,-1)--(1, 0.5) node[above right] {$c$};
\draw[dashed](-1,-0.5)--(0.5, 1) node[above right] {$c''$};
\draw (-1.5, -1.5) node {};
\draw (1.5, 1.5) node {};
\end{tikzpicture}
\\
\end{tabular}
\caption{Transformation rules}
\label{fig:transformation}
\end{figure}

\begin{figure}
\def\eps{0.1}
\def\s{(4/7)}
\def\t{(1/(1+\s))}
\def\m{7}
\def\n{4}
\def\h{(\n-(\m-\eps)*\s+0.12)}
\begin{tikzpicture}[scale=0.7]
\draw[gray, step=1, thin] (0,0) grid (\m,\n);
\draw[gray,thin,->] (0, 0) -- (\m+0.5, 0) node[black,right] {$x$};
\draw[gray,thin,->] (0, 0) -- (0, \n+0.5) node[black,above] {$y$};
\draw (0, {\h}) -- ({(1-\h) / \s},1);
\draw (2, {\h+2*\s}) -- ({(3-\h) / \s},3);
\draw (6, {\h+6*\s}) -- ({(4-\h) / \s},4);
\end{tikzpicture}
\qquad
\def\eps{0.1}
\def\s{(4/7)}
\def\t{(1/(1+\s))}
\def\m{7}
\def\n{4}
\def\h{(\n-(\m-\eps)*\s+0.12)}
\begin{tikzpicture}[scale=2]
\draw (-1,1.9) rectangle (-0.5,1.7);
\draw (-0.75,1.8) node {$K_3$};
\draw (0,0) rectangle (1,1);

%\draw ({(\t)*(1-\h)}, {1-(\t)*(1-\h)}) node[left]{\tiny finish} node[draw,circle, inner sep=1pt,fill] {} -- (1, {\h+\s});
\draw (0, {\h})node[draw,circle, inner sep=1pt,fill] {} -- (1, {\h+\s});
\draw (0, {\h+\s}) -- ({1/\s*(1-\h)-1}, 1) node[draw,circle, inner sep=1pt,fill] {};
\draw (0, {\h+2*\s-1})node[draw,circle, inner sep=1pt,fill] {} -- (1, {\h+3*\s-1});
\draw (0, {\h+3*\s-1}) -- ({-3 +(2-\h)/\s},1);
\draw (({-3 +(2-\h)/\s},0) -- (1, {\h+4*\s-2});
\draw (0,{\h+4*\s-2}) --({-4 +(3-\h)/\s},1)node[draw,circle, inner sep=1pt,fill] {};
\draw (0,{\h+6*\s-3})node[draw,circle, inner sep=1pt,fill] {} -- ({-6 +(4-\h)/\s},1)node[draw,circle, inner sep=1pt,fill] {};
\begin{knot}[clip width=4]
\strand (-0.9,1.7) to [out=-90,in=90,looseness=0.6] ({{1/\s*(1-\h)-1}}, 1);
\strand (-0.6,1.7) to [out=-90,in=90,looseness=0.6] ({{-4 +(3-\h)/\s}},1);
\strand (-0.75,1.7) to [out=-90,in=90,looseness=0.6] ({{-6 +(4-\h)/\s}},1);
\strand ({-3 +(2-\h)/\s}, 1) -- ({-3 +(2-\h)/\s}, 2);
\strand ({-3 +(2-\h)/\s}, 0) -- ({-3 +(2-\h)/\s}, -0.1);
\strand (-0.75,1.9) -- (-0.75,2);
\strand (-0.9,1.9) -- (-0.9,2);
\strand (-0.6,1.9) -- (-0.6,2);
\strand (-0.6,-0.1) to [out=90,in=-180,looseness=0.6] (0, {\h});
\strand (-0.75,-0.1) to [out=90,in=-180,looseness=0.6] (0, {\h+2*\s-1});
\strand (-0.9,-0.1) to [out=90,in=-180,looseness=0.6] (0,{\h+6*\s-3});
\flipcrossings{1,2}
\end{knot}
\end{tikzpicture}
\caption{The complex of Figure \ref{fig:example coloring} after rule D)} and before rule BE).
\label{fig:example step2}
\end{figure}

\subsection{Representation of a torus knot}
For any pair of relatively prime $m,n$ the $(m,n)$-torus knot can be obtained by composing $m$ rotations on $n$ strands, a rotation being defined as $T_{n-1} \cdots T_2 T_1$. If we take a single interval from $(\varepsilon,0)$ to $(m+\varepsilon,n)$ for small $\varepsilon>0$, project it to the unit square and proceed as before, extending the strands vertically and connecting the endpoints on the left side of the square to the endpoints on the right side, we obtain a braid whose closure is the $(m,n)$-torus knot. Tilt the interval slightly to obtain an interval $(0,\varepsilon)-(m+\varepsilon,n)$ with a very small $\varepsilon>0$. The result is an admissible coloring, with a single interval, which we denote by $c_{m,n}$. The complex corresponding to $c_{m,n}$ differs from the complex of the $(m,n)$-knot only by having $K_1$ on the leftmost strand, so its $\HHH$ is the same. See Figure \ref{fig:example43} for an example with $(4,3)$ knot.
\begin{figure}
\[
\def\eps{0.1}
\def\s{(3/4)}
\def\t{(1/(1+\s))}
\def\m{4}
\def\n{3}
\def\h{-0.1}
\begin{tikzpicture}[scale=0.5,baseline=30pt]
\draw[gray, step=1, thin] (0,0) grid (\m,\n);
\draw[gray,thin,->] (0, 0) -- (\m+0.5, 0) node[black,right] {$x$};
\draw[gray,thin,->] (0, 0) -- (0, \n+0.5) node[black,above] {$y$};
\draw ({-\h/\s}, 0) -- ({(-\h+3)/\s},3);
\end{tikzpicture}
\quad
\begin{tikzpicture}[scale=2,baseline=30pt]
\draw (0,0) rectangle (1,1);
\draw ({-\h/\s},-0.1)-- +(0,0.1) -- (1,{\h+\s});
\draw (0,{\h+\s}) -- ({(-\h+1)/\s-1},1) -- +(0,0.5);
\draw ({(-\h+1)/\s-1},-0.1)-- +(0,0.1) -- (1,{\h+2*\s-1});
\draw (0,{\h+2*\s-1}) -- ({(-\h+2)/\s-2},1) -- +(0,0.5);
\draw ({(-\h+2)/\s-2},-0.1)-- +(0,0.1) -- (1,{\h+3*\s-2});
\draw (0,{\h+3*\s-2}) -- (1,{\h+4*\s-2});
\draw (0,{\h+4*\s-2}) -- ({-\h/\s},1) -- +(0,0.5);
\end{tikzpicture}
\quad \Rightarrow \quad
\def\s{((3/4)-0.05)}
\def\h{0.1}
\begin{tikzpicture}[scale=0.5,baseline=30pt]
\draw[gray, step=1, thin] (0,0) grid (\m,\n);
\draw[gray,thin,->] (0, 0) -- (\m+0.5, 0) node[black,right] {$x$};
\draw[gray,thin,->] (0, 0) -- (0, \n+0.5) node[black,above] {$y$};
\draw (0, {\h}) -- ({(-\h+3)/\s},3);
\end{tikzpicture}
\quad
\begin{tikzpicture}[scale=2,baseline=30pt]
\draw (-1,1.4) rectangle (-0.5,1.2);
\draw (-0.75,1.3) node {$K_1$};
\draw (0,0) rectangle (1,1);

\draw (-0.75,-0.1) to[out=90,in=-180,looseness=0.5] (0,{\h})node[draw,circle, inner sep=1pt,fill]{} -- (1,{\h+\s});
\draw (0,{\h+\s}) -- ({(-\h+1)/\s-1},1) -- +(0,0.5);
\draw ({(-\h+1)/\s-1},-0.1)-- +(0,0.1) -- (1,{\h+2*\s-1});
\draw (0,{\h+2*\s-1}) -- ({(-\h+2)/\s-2},1) -- +(0,0.5);
\draw ({(-\h+2)/\s-2},-0.1)-- +(0,0.1) -- (1,{\h+3*\s-2});
\draw (0,{\h+3*\s-2}) -- (1,{\h+4*\s-2});
\draw (0,{\h+4*\s-2}) -- ({(-\h+3)/\s-4},1)node[draw,circle, inner sep=1pt,fill]{} to[out=90,in=-90,looseness=0.5] (-0.75,1.2);
\draw (-0.75,1.4)--(-0.75,1.5);
\end{tikzpicture}
\]
\caption{From $(4,3)$-torus knot to our presentation.}
\label{fig:example43}
\end{figure}
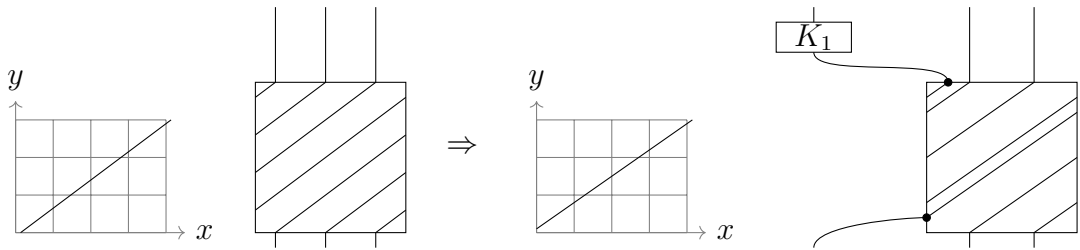

\section{Explicit formulas}
\subsection{Dyck paths}
Let us start with the coloring for the $(m,n)$-knot and move the line upwards. In the process of moving the line each time we have to apply the rule BE) choose one of the two colorings: $c'$ or $c''$ and proceed. Let us shade the region swept by the intervals of the coloring. Its boundary is an $(m,n)$-Dyck path, and all Dyck paths are obtained in this way.
\begin{defn}
An $(m,n)$-Dyck path is a lattice path from $(0,0)$ to $(m,n)$ consisting only of steps $(1,0)$ and $(0,1)$ and staying weakly above the line $y=\frac{n}{m}x$, which is called the \emph{diagonal}.
\end{defn}
The observation that Dyck paths are in one-to-one correspondence with sequences of choices for the rule BE) almost directly translates into the following result, which appears as a conjecture in \cite{oblomkov2012hilbert} and \cite{gorsky2015refined}.
\begin{thm}\label{thm: dyck paths}
The triply graded homology invariant $\cP_{m,n}=\cP(c_{m,n})$ of the $(m,n)$-torus knot is given by
\[
\left(a (qt)^{-\frac12}\right)^{\frac{(m-1)(n-1)}2}\frac{1}{1-t} \sum_{\pi} t^{\area(\pi)} q^{h_+(\pi)} \prod_{v\in v^*(\pi)} (1-a q^{-k(v)}),
\]
where for each $(m,n)$-Dyck path $\pi$ we use the following notations: 
\begin{itemize} 
\item $\area(\pi)$ is the number of $1\times 1$ lattice squares contained between the path and the diagonal,
\item $v^*(\pi)$ is the set of outer vertices of $\pi$ without the vertex most distant from the diagonal,
\item $k(v)$ for each $v\in v^*(\pi)$ is the number of vertical steps ($=$number of horizontal steps) of $\pi$ intersected by the line parallel to the diagonal passing through $v$.
\item $h_+(\pi)$ is the number of pairs ($s_1$, $s_2$) where $s_1$ resp. $s_2$ is the horizontal resp. vertical step of $\pi$, $s_1$ is to the left of $s_2$ and there exists a line parallel to the diagonal intersecting both $s_1$ and $s_2$.
\end{itemize}
\end{thm}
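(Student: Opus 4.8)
The plan is to run the sweep of Section~2 on the coloring $c_{m,n}$ and to keep track of the scalar factors produced by the rules A), C), D), BE), matching them against the statistics $\area$, $h_+$, $k(v)$, $v^*$. Since $c_{m,n}$ consists of a single interval, its complex is the $(m,n)$-torus braid with an inert $K_1=R$ on the leftmost strand, so $\HHH(c_{m,n})$ is the triply graded homology of the $(m,n)$-torus knot; as this braid has $m(n-1)$ positive crossings, $n$ strands, and connected closure, $\chi=\tfrac{(m-1)(n-1)}2$, and therefore $\cP_{m,n}=\bigl(aq^{1/2}t^{-1/2}\bigr)^{(m-1)(n-1)/2}\,\HHH(c_{m,n})$. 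So the theorem amounts to an explicit formula for $\HHH(c_{m,n})$, the only discrepancy between $a q^{1/2}t^{-1/2}$ and $a(qt)^{-1/2}$ being a global power of $q$ that must be reconciled at the end.

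Now let $h$ increase. Each time $l_h$ crosses a lattice point we are in exactly one of the four cases of Figure~\ref{fig:transformation}, the case being dictated by the local shape of the boundary of the region already swept: a straight edge triggers rule C) or D), a convex corner triggers rule A), and a concave corner triggers rule BE). Rules C), D) do not change $\HHH$; A) multiplies it by $q^{k(c')}-a$; and BE) is Lemma~\ref{lem:reduction1} with $l=k(c)$, replacing $\HHH(c)$ by $q^{-k(c)}\bigl(\HHH(c')+t\,\HHH(c'')\bigr)$, where $k(c')=k(c)+1$ and $k(c'')=k(c)$. One first checks that the application of BE) is always legitimate: the process terminates because the swept region only grows and is contained in the $m\times n$ box, so arguing by downward induction along the recursion, the two children of any BE)-node already have $\HHH\in F_\ev$, Lemma~\ref{lem:reduction1} applies, and $\HHH(c)\in F_\ev$ as well. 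Unfolding the recursion expresses $\HHH(c_{m,n})$ as a sum over the root-to-leaf branches of the tree of BE)-choices; by \cite{mellit2016toric} these branches are in bijection with the $(m,n)$-Dyck paths $\pi$, the lower boundary of the total swept region being $\pi$, and every branch terminates at a complex equivalent to $R_1$, with $\HHH(R_1)=\tfrac1{1-t}$. This yields the prefactor $\tfrac1{1-t}$.

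It remains to identify the scalar accumulated along the branch of $\pi$ with $t^{\area(\pi)}q^{h_+(\pi)}\prod_{v\in v^*(\pi)}\bigl(1-aq^{-k(v)}\bigr)$, up to the global power of $q$. The A)-events along a branch are exactly the convex (outer) corners of $\pi$ other than the one most distant from the diagonal — the latter carries no A)-event, being the place where the symmetrizer is finally absorbed into the terminal $R_1=K_1$ — so they are indexed by $v^*(\pi)$; and at a corner $v$ the quantity $k(c')$ entering rule A) equals $k(v)$, so the corresponding factor is $q^{k(v)}-a=q^{k(v)}\bigl(1-aq^{-k(v)}\bigr)$. A factor $t$ is picked up precisely at each BE)-node where the branch passes to $c''$, and the number of such nodes telescopes to $\area(\pi)$. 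Finally, collecting all powers of $q$ — the $q^{-k(c)}$ at every BE)-node, the $q^{k(v)}$ extracted from every A)-node, and the normalization shift — one must obtain $q^{h_+(\pi)}$. This last identification is the step I expect to be the main obstacle, as $h_+$ is a global statistic; the natural approach is induction on the number of lattice points already crossed by $l_h$, i.e. peeling the top corner off $\pi$, proving a recursion for $h_+$ under that operation (akin to the standard recursions for $\dinv$) and using that $k(v)$ counts the steps of $\pi$ met by the line through $v$ parallel to the diagonal. Summing over all $(m,n)$-Dyck paths $\pi$ then gives the formula.
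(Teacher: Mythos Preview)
Your setup matches the paper's: reduce to $\HHH(c_{m,n})$, run the sweep, identify branches of the BE)-tree with $(m,n)$-Dyck paths, match the $t$-power with $\area(\pi)$ via the $c''$-choices, and match the A)-events with $v^*(\pi)$ producing factors $q^{k(v)}-a$. You also correctly isolate the one nontrivial point, namely the $q$-bookkeeping, which (once you observe that the BE)-events along the branch for $\pi$ occur exactly at the inner vertices $v_*(\pi)$ together with the lattice points $i(\pi)$ strictly between $\pi$ and the diagonal) is the identity
\[
h_+(\pi)\;=\;\frac{(m-1)(n-1)}{2}\;+\;\sum_{v\in v^*(\pi)} k(v)\;-\;\sum_{p\in i(\pi)\cup v_*(\pi)} k(p).
\]

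The gap is that you do not prove this identity; you only name it as ``the main obstacle'' and gesture at an induction by peeling corners and tracking a recursion for $h_+$. The paper does \emph{not} argue inductively. It proves the identity directly, in three short steps. First, pairs $(s_1,s_2)$ of a horizontal step left of a vertical step are in bijection with unit cells above $\pi$ in the triangle on $(0,0),(0,n),(m,n)$, giving $|o(\pi)|+|i(\pi)|=\tfrac{(m-1)(n-1)}{2}$. Second, for each pair in $o(\pi)$ that does \emph{not} contribute to $h_+(\pi)$ one of $s_1,s_2$ is strictly farther from the diagonal; replacing the farther step by the unique point of $i(\pi)$ on the same grid line whose parallel-to-diagonal line meets the nearer step gives a bijection yielding $|o(\pi)|-h_+(\pi)=\sum_{p\in i(\pi)}(k(p)-1)$. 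Third, listing the vertices of $\pi$ by decreasing distance to the diagonal and counting ordered $\pm$ pairs shows $\sum_{v\in v_*(\pi)} k(v)=\sum_{v\in v^*(\pi)} k(v)$. Combining these three gives the displayed identity.

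Your inductive route might be made to work, but as written it is only a plan; the paper's bijective argument is short and self-contained, and you should supply it (or an equivalent) rather than defer it.
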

\begin{proof}
Let $\beta$ be the composition of $m$ rotations on $n$ strands. We have $e(\beta)=m(n-1)$, $c(\beta)=1$, therefore $\chi(\beta)=\frac{(m-1)(n-1)}2$. So it is enough to show that
\[
\HHH(\beta) = \frac{1}{1-t} \sum_{\pi} t^{\area(\pi)} q^{h_+(\pi)-\frac{(m-1)(n-1)}2} \prod_{v\in v^*(\pi)} (1-a q^{-k(v)}),
\]
which we rewrite as follows:
\[
\HHH(\beta) = \frac{1}{1-t} \sum_{\pi} t^{\area(\pi)} q^{h_+(\pi)-\frac{(m-1)(n-1)}2-\sum_{v\in v^*(\pi)} k(v)} \prod_{v\in v^*(\pi)} (q^{k(v)}-a).
\]
Each Dyck path corresponds to a sequence of choices between $c'$ and $c''$ in the rule BE). We claim that each summand in the sum corresponds to the contribution calculated from the transformation rules. The power of $t$ obtained from choosing $c''$ matches $\area(\pi)$. Factors $q^{k(v)}-a$ correspond to the applications of rule A). Thus it remains to match the power of $q$ for each Dyck path. This amounts to showing
\[
h_+(\pi)-\frac{(m-1)(n-1)}2-\sum_{v\in v^*(\pi)} k(v) = -\sum_{p\in i(\pi)\cup v_*(\pi)} k(p),
\]
where $i(p)$ is the set of lattice points between $\pi$ and the diagonal, $v_*(\pi)$ is the set of internal vertices of $\pi$, and $k(p)$ for any lattice point is defined in the same way as $k(v)$ was. A proof of this identity is given below.

When we say ``a line through $p$'' we will always mean the line parallel to the diagonal passing through $p$. Let $o(\pi)$ be the set of pairs $(s_1, s_2)$ where $s_1$ resp. $s_2$ is the horizontal resp. vertical step of $\pi$ and $s_1$ is to the left of $s_2$. Note that such pairs correspond to $1\times 1$ grid squares which are outside $\pi$ but inside that right-angled triangle with vertices $(0,0)$, $(m,n)$, $(0,n)$. Therefore $|i(\pi)|+|o(\pi)|$ is the total number of squares inside the triangle, i.e.
\[
 |i(\pi)|+|o(\pi)| = \frac{(m-1)(n-1)}2.
\]
Next look at the complement of the set of pairs contributing to $h_+(\pi)$ in $o(\pi)$. This is the set of pairs $(s_1, s_2)$ such that either $s_1$ is further away from the diagonal than $s_2$ is or vice versa. In the first case the rightmost end of $s_1$ is further than $s_2$, so there is a unique point $p\in i(\pi)$ with the same $x$-coordinate as the rightmost end of $s_1$ and such that the line through it intersects $s_2$. Vice versa, starting from a point $p \in i(\pi)$ and a vertical step $s_2$ to the right of it such that the line through $p$ intersects $s_2$, we recover $s_1$ uniquely. Similarly, starting from a pair $(s_1, s_2)$ such that $s_2$ is further away from the diagonal we find a point $p\in i(\pi)$ with the same $y$-coordinate as the lower end of $s_2$ and such that line through $p$ intersects $s_1$, and such point allows to reconstruct $s_2$. So we obtain
\[
|o(\pi)|-h_+(\pi) = \sum_{p\in i(\pi)} (k(p)-1),
\]
because for $p\in i(p)$ the number of horizontal steps to the left of $p$ which intersect the line through $p$ is $1$ less than the number of vertical such steps. So we have established the following:
\[
\frac{(m-1)(n-1)}2 = |o(\pi)| + |i(\pi)| = h_+(\pi) + \sum_{p\in i(\pi)} k(p).
\]
It remains to show that 
\[
\sum_{p\in v_*(\pi)} k(p) = \sum_{p\in v^*(\pi)} k(p).
\]
Let us make a list of symbols $+/-$ as follows: order the set of all vertices of $\pi$ by decreasing their distance to the diagonal and put $+$ resp. $-$ for each external resp. internal vertex. Let the total number of $+$ be $r$, so that the number of $-$ is $r-1$. Then the number on the left hand side is the number of pairs $+$, $-$ appearing in the list in this order minus the number of pairs $-$, $-$ minus $r-1$ (the last minus is because we have to subtract $1$ for each $-$). The number on the right hand side is the number of pairs $+$, $+$ minus the number of pairs $-$, $+$, again in this order. Then the difference between the left hand side and the right hand side is
\[
r(r-1) - (r-1) - \binom{r}{2} - \binom{r-1}{2} =0.
\]
\end{proof}

\subsection{Toric braids}
In \cite{mellit2016toric} an action of a (positive part of) the toric braid group was constructed on the vector space $V_k$, which is the space of functions which are symmetric functions in $X$ and polynomials in $y_1, y_2, \ldots, y_k$. The colorings were interpreted as toric braids, and then evaluated in the representation. Such evaluations were shown to satisfy recursions under the transformations A), C), D), BE) and so were connected to Dyck paths. Now we will directly connect them to $\HHH$. For any coloring $c$ denote by $B_c$ the corresponding toric braids and by $I(c)$ the evaluation
\[
\left(d_-^{k(c)} B_c d_+^{k(c)} (1)\right)[a-1].
\]
Recall that the result of $d_+^{k(c)}$ is a vector in $V_{k(c)}$. Then we apply the braid, and then the operators $d_-$ which bring us to the space $V_0$, which is simply the space of symmetric functions. Finally, we plethystically substitute $[a-1]$. It turns out that this substitution simplifies matters a lot:

\begin{prop}\label{prop: simplification}
For any vector $F\in V_k$ of the form $F=F' \prod_{i=1}^k y_i$ denote by $I(v)$ the product
\[
I(F)=F'[a-q^k; 1, q,\ldots,q^{k-1}] \prod_{i=0}^k (q^i-a),
\]
where in the right hand side we perform plethystic substitution $X=a-q^k$, $y_i=q^{i-1}$ for $i=1,\ldots,k$. Then we have
\[
I(d_- F) = F,\qquad I(d_+ F) = (a-q^k) I(F).
\]
\end{prop}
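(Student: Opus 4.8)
The plan is to reduce both identities to explicit plethystic computations by substituting the definitions of $d_\pm$ from \cite{mellit2016toric}. Recall that $d_+\colon V_k\to V_{k+1}$ and $d_-\colon V_k\to V_{k-1}$ are each a composition of three kinds of operation: a plethystic shift of the symmetric alphabet $X$ by a rational multiple of one of the variables $y_j$; multiplication by (resp.\ a coefficient extraction in) that variable; and a word in the Demazure--Lusztig / braid operators acting on the $y_i$'s (which in the toric setting may also couple $X$ and the $y_i$'s). On the other side, $I$ restricted to vectors divisible by $y_1\cdots y_k$ is the composition: divide by $y_1\cdots y_k$, apply the specialization $\phi_k$ sending $X\mapsto a-q^k$ and $y_i\mapsto q^{i-1}$, and multiply by $\prod_{i=0}^k(q^i-a)$. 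The first thing I would record is the arithmetic identity that makes everything telescope: since $\phi_{k+1}$ sends $y_{k+1}\mapsto q^{k}$, the shift $X\mapsto X+(q-1)y_{k+1}$ built into $d_+$ sends $a-q^{k+1}\mapsto a-q^{k+1}+(q-1)q^{k}=a-q^{k}$, so $\phi_{k+1}$ precomposed with that shift restricts to $\phi_k$ on anything depending only on $X$ and $y_1,\dots,y_k$; dually, the shift inside $d_-$ is calibrated so that restoring the variable it removes to its $\phi$-value $q^{k-1}$ turns $\phi_{k-1}$ back into $\phi_k$.

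With this in hand each identity unwinds mechanically. For $I(d_+F)=(a-q^k)I(F)$: one pulls the symmetric factor $y_1\cdots y_{k+1}$ out (a symmetric polynomial commutes with all the $y$-operators), applies the telescoping above to the plethystic shift, and is left with the claim that the remaining braid/Hecke word contributes, after $\phi_{k+1}$, exactly the factor $(a-q^k)/(q^{k+1}-a)$ --- which combines with $\prod_{i=0}^{k+1}(q^i-a)=(q^{k+1}-a)\prod_{i=0}^k(q^i-a)$ to give the desired $(a-q^k)\prod_{i=0}^k(q^i-a)$. This last point is where the precise definition from \cite{mellit2016toric} enters: I would check it by reducing to the one-variable behavior of each generator on the locus $y_{i+1}=q\,y_i$ (where the geometric specialization puts consecutive $y$'s), on which the Demazure--Lusztig operators act diagonally because their divided differences $1/(y_i-y_{i+1})$ are regular there. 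For $I(d_-F)=F$ I would run the same argument in reverse: the hypothesis that $F$ is divisible by $y_1\cdots y_k$ is exactly what makes the $y_k$-extraction performed by $d_-$ invertible, its inverse being the reinsertion $y_k=q^{k-1}$ carried out by $\phi_{k-1}$ inside $I$ at level $k-1$; the shift of $d_-$ telescopes $\phi_{k-1}$ back to $\phi_k$, the inverse braid word contributes the compensating factor, and reassembling the powers of $q$ and the $(q^i-a)$-products yields the claim. Throughout it suffices to test on the spanning set $F'=p_\lambda[X]\cdot(\text{monomial in the }y_i)$, on which every plethystic substitution is explicit, e.g.\ $p_n[a-q^k]=a^n-q^{nk}$.

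The step I expect to be the real obstacle is the precise accounting for the braid/Hecke operators under the geometric specialization $y_i=q^{i-1}$: one must verify that the numerators appearing in each $T_i$ and $T_i^{-1}$ vanish to the order needed for the output to be an honest scalar multiple of the specialization, not merely that the denominators are nonzero, and then keep exact track of the powers of $q$, the signs, and the telescoping products $\prod_{i=0}^j(q^i-a)$ through the whole composition --- a single misplaced power of $q$ or sign would break the match with Theorem~\ref{thm: dyck paths}. Everything else is routine plethystic bookkeeping once the formulas for $d_\pm$ are inserted.
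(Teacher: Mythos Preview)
The paper does not prove this proposition; it is stated without argument and then immediately used. So there is no proof in the paper to compare yours against --- the result is meant to follow by direct computation from the explicit formulas for $d_\pm$ in \cite{mellit2016toric}.

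Your plan is precisely that direct computation, and the key observation you isolate --- the telescoping $a-q^{k+1}+(q-1)q^{k}=a-q^{k}$, which makes the plethystic shift built into $d_+$ convert the specialization $\phi_{k+1}$ back to $\phi_k$ on the symmetric alphabet --- is indeed the heart of the matter. Your honest flag that the Hecke bookkeeping on the locus $y_i=q^{i-1}$ is the delicate step is fair: once the Demazure--Lusztig formula is written out and one notes that on the locus $y_{i+1}=qy_i$ each $T_i$ acts by an explicit scalar, the remainder is mechanical but sign- and power-sensitive, exactly as you say. One refinement worth making before you execute the plan: in the presentations of \cite{mellit2016toric} the operators $d_+$ and $d_-$ are not symmetric in the way your sketch suggests --- only one of them carries a braid word, the other being a pure plethystic shift together with a multiplication or coefficient extraction --- so one of the two identities is nearly immediate and only the other requires the Hecke analysis you describe.

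Note also that the printed statement contains a typo: $I(d_-F)=F$ cannot be literally correct, since the left side is a scalar while $F\in V_k$; it should read $I(d_-F)=I(F)$, and this is what your argument is implicitly aimed at.
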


In particular, we have $I(c)=I(B_c d_+^{k(c)}(1))$ and the recursions satisfied by $I(c)$ are similar to the ones for $\HHH(c)$. To be precise, we have:
\begin{thm}\label{thm: daha}
For any coloring $c$ let $\widetilde{\HHH}(c)$ be the complex constructed like the one in the main construction, but using the standard Rouquier complexes, which are $q^\frac12 T_i$ in our notation. Then we have
\[
I(c) = (-1)^{N(c)} q^{\frac{k(c)-N(c)}2} (1-a)(1-t)\; \widetilde{\HHH}(c).
\]
\end{thm}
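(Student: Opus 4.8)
The plan is to show that $I(c)$ and $\widetilde\HHH(c)$ are governed by the \emph{same} recursion — the one obtained by sliding $l_h$ upward and applying rules A), C), D), BE) — and that the two sides agree on the terminal colorings of that process. Since every coloring is reduced to terminal ones in finitely many steps, this forces the identity, with the monomial $(-1)^{N(c)}q^{(k(c)-N(c))/2}(1-a)(1-t)$ appearing as the multiplicatively compatible discrepancy between the two recursions.

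First I would make the recursion for the left-hand side explicit. By Proposition~\ref{prop: simplification} we may replace $I(c)$ by $I\bigl(B_c\,d_+^{k(c)}(1)\bigr)$, and the proposition records precisely how $I$ interacts with $d_+$ (a factor $a-q^{k}$) and with $d_-$ (plain removal). Feeding this into the description of the toric braids $B_c$ and the $V_k$-action of \cite{mellit2016toric}, one reads off: rule A) multiplies $I(c)$ by $a-q^{k(c')}$, reflecting an extra $d_+$ together with the accompanying drop of $k$ and of the number of strands; rules C) and D) are braid isotopies and change $I(c)$ only by an explicit power of $q$; and rule BE) yields a half-integer power of $q$ times $\bigl(I(c')+t\,I(c'')\bigr)$, the splitting into $c'$ and $c''$ being exactly the one already built into the toric braid action. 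This step is a repackaging of the recursions of \cite{mellit2016toric} in terms of the plethystically substituted evaluation.

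Second I would record the recursion for $\widetilde\HHH(c)$. The transformation rules of Figure~\ref{fig:transformation} give the recursion for $\HHH(c)$, and passing from our Rouquier complexes $T_i$ to the standard ones $q^{1/2}T_i$ multiplies the answer by an explicit half-integer power of $q$ fixed by the combinatorics of $c$; so $\widetilde\HHH(c)$ satisfies the rules of Figure~\ref{fig:transformation} with every $q$-power shifted in a controlled way. I would then check, rule by rule, that multiplying by $(-1)^{N(c)}q^{(k(c)-N(c))/2}(1-a)(1-t)$ turns the $\widetilde\HHH$-recursion into the $I$-recursion: the sign $(-1)^{N(c)}$ converts the $q^{k(c')}-a$ of rule A) into the $a-q^{k(c')}$ on the $I$-side (using that A) drops both $k$ and $N$ by one, so the $q^{(k-N)/2}$-part is unaffected), while the half-integer power of $q$ simultaneously absorbs the standard-versus-nonstandard shift and the intrinsic $q$-weights of rules A) and BE). Finally I would verify the base case on a terminal coloring — one built only from intervals too short to cross a vertical grid line, so the complex is a product of $K_k$'s with trivial braids and $\widetilde\HHH$ is computed directly, while $B_c\,d_+^{k(c)}(1)$ is evaluated straight from Proposition~\ref{prop: simplification} — thereby fixing the overall constant.

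The step I expect to be the main obstacle is the simultaneous bookkeeping of signs and of powers of $q$ (half-integers included) through all four rules on both sides: one must track how the number of crossings $e(B_c)$, the number of intervals $k(c)$, and the number of strands $N(c)$ each change under A), C), D), BE), and confirm that the three sources of $q$-powers — the prefactor in the theorem, the standard-Rouquier shift, and the $q$-weights in the rules — cancel exactly. A second point needing care is that rule BE) is conditional (applicable only when $\HHH(c'),\HHH(c'')\in F_\ev$, by Lemma~\ref{lem:reduction1}); but since the reduction terminates this holds at every stage, so the recursion is legitimately applicable on the $\widetilde\HHH$-side just as the splitting is unconditionally available on the $I$-side. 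As an alternative to the rule-by-rule matching, one could instead run both recursions fully down to a sum over the lattice paths attached to $c$ — generalizing both the Dyck-path sum in the proof of Theorem~\ref{thm: dyck paths} and the corresponding sum from \cite{mellit2016toric} — and compare term by term.
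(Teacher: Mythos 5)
Your approach is essentially the same as the paper's: reduce both $I(c)$ and $\widetilde\HHH(c)$ to their behavior under the four transformation rules A), C), D), BE), check rule by rule that the prefactor $(-1)^{N(c)}q^{(k(c)-N(c))/2}(1-a)(1-t)$ converts the one recursion into the other, and close the induction with a base case. The paper's base case is a single short interval (an unknot with $K_1$), where both sides evaluate to $a-1$; you state a slightly broader base case (several short intervals), which would also work since rule A) peels them off one at a time, but the single-interval case is what the paper actually uses and is cleaner. One small inaccuracy: in the $I$-recursion the BE) rule is $I(c)=q^{-1/2}I(c')+t\,I(c'')$, with the $q^{-1/2}$ hitting only the $c'$ term, not $q^{-1/2}\bigl(I(c')+t\,I(c'')\bigr)$ as you suggest; this asymmetry is precisely what the $q^{(k-N)/2}$ part of the prefactor has to absorb, since $k$ and $N$ change differently along the $c\to c'$ and $c\to c''$ branches. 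Likewise the complex attached to a terminal coloring carries a single symmetrizer $K_{k(c)}$, not a product of several. Neither slip affects the validity of the strategy, and your identification of the tedious $q$-bookkeeping as the main obstacle matches the paper's own remark.
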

\begin{proof}
The base of induction is the case where $c$ is a single short interval, corresponding to an unknot with $K_1$ attached to it. We have $\widetilde{\HHH}(c)=\frac{1}{1-t}$, $N(c)=k(c)=1$, so the right hand side is $a-1$. The left hand side is also $I(d_+(1))=a-1$.

Next we analyse how the extra factors change the rules A)-BE). The sign change is introduced in rules A) and C). Counting powers of $q$ is a bit tedious, but in the end we obtain a perfect match with the recursions for $I(c)$ obtained by applying Proposition \ref{prop: simplification} to the recursions given in \cite{mellit2016toric}:
\begin{enumerate}
\item[A)] $I(c)=(a-q^{k(c')} I(c')$,
\item[C)] $I(c)=-q^{k(c')-1} I(c')$,
\item[D)] $I(c)=q^{k(c')-1} I(c')$,
\item[BE)] $I(c)=q^{-\frac12} I(c') + t I(c'')$.
\end{enumerate}
\end{proof}

In particular, for $c=c_{m,n}$ we obtain a direct proof of the following result that can also be obtained by combining Theorem \ref{thm: dyck paths} with the main result of \cite{mellit2016toric}, and with the results of \cite{gorsky2015refined}. As explained in \cite{gorsky2015refined}, this result implies a conjecture of Aganagic and Shakirov \cite{aganagic2011knot}. See also \cite{cherednik2013jones}, \cite{cherednik2014daha}.

\begin{cor}\label{cor:daha}
The triply graded homology invariant $\cP_{m,n}=\cP(c_{m,n})$ of the $(m,n)$-torus knot is given by
\[
 (-1)^n \left(a (qt)^{-\frac12}\right)^{\frac{(m-1)(n-1)}2}\frac{1}{1-t}\frac{P_{m,n}(1)[a-1]}{a-1},
\]
where $P_{m,n}$ is the corresponding generator of the elliptic Hall algebra, viewed as an operator acting on symmetric functions.
\end{cor}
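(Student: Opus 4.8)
The plan is to read the corollary off Theorem~\ref{thm: daha} by specializing to $c=c_{m,n}$ and then carrying out two translations, neither of which involves a new idea, only normalizations: first from the ``tilde'' invariant $\widetilde\HHH$ built from the standard Rouquier complexes back to the framing-corrected invariant $\cP$, and second from the toric-braid evaluation $I(c_{m,n})$ to the elliptic Hall algebra generator $P_{m,n}$ applied to the constant symmetric function~$1$.

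I would first record the combinatorial data of $c_{m,n}$. It consists of a single interval, so $k(c_{m,n})=1$; since the interval runs from near $(0,0)$ to near $(m,n)$, the formula for $N$ gives $N(c_{m,n}) = 1 + \lfloor n-\varepsilon\rfloor = n$. Hence Theorem~\ref{thm: daha} reads
\[
I(c_{m,n}) \;=\; (-1)^{n}\, q^{\frac{1-n}{2}}\,(1-a)(1-t)\,\widetilde\HHH(c_{m,n}).
\]
The complex attached to $c_{m,n}$ with the standard Rouquier complexes $q^{1/2}T_i$ is, up to the inert $K_1$ on one strand, the braid $\beta=(T_{n-1}\cdots T_1)^{m}$ on $n$ strands with one extra factor of $q^{1/2}$ per crossing; as $e(\beta)=m(n-1)$ this gives $\widetilde\HHH(c_{m,n}) = q^{m(n-1)/2}\,\HHH(\beta)$. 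Plugging in the definition $\cP_{m,n}=\cP(\beta)=(aq^{1/2}t^{-1/2})^{\chi(\beta)}\HHH(\beta)$ with $\chi(\beta)=\tfrac{(m-1)(n-1)}{2}$ and collecting all the half-integer powers, the exponents combine exactly into $\bigl(a(qt)^{-1/2}\bigr)^{(m-1)(n-1)/2}$, and one arrives at
\[
\cP_{m,n} \;=\; \frac{(-1)^{n}\,\bigl(a(qt)^{-\frac12}\bigr)^{\frac{(m-1)(n-1)}{2}}}{(1-a)(1-t)}\; I(c_{m,n}).
\]

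It then remains to identify $I(c_{m,n})$. By Proposition~\ref{prop: simplification} one has $I(c_{m,n}) = I\bigl(B_{c_{m,n}}d_+(1)\bigr)$, so it suffices to recognize the vector $d_-B_{c_{m,n}}d_+(1)\in V_0$, together with the plethystic substitution built into $I$ (which for $k=1$ is $X\mapsto a-q$ with the single $y$-variable set equal to $1$), as a normalization of $P_{m,n}(1)[a-1]$. This is the toric-braid/elliptic-Hall-algebra dictionary of \cite{mellit2016toric}: the single-interval toric braid of slope $n/m$ realizes the generator $P_{m,n}$, and under this dictionary the substitution in $I$ matches the substitution $[a-1]$ used in \cite{gorsky2015refined}. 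Tracing the sign and the remaining powers of $q$ through this identification gives $I(c_{m,n}) = -P_{m,n}(1)[a-1]$, and substituting into the formula above, using $-\tfrac1{1-a}=\tfrac1{a-1}$, yields the claimed expression.

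The substantive work has already been done: Theorem~\ref{thm: daha} matched the braid recursions A)--BE) with the toric-braid recursions of \cite{mellit2016toric}, and \cite{mellit2016toric} identified toric braids with elliptic Hall algebra elements. The only delicate point in the corollary itself---and where a sign error is most likely---is the normalization bookkeeping, since one is juggling three conventions at once: our Rouquier complexes $T_i$ versus the standard $q^{1/2}T_i$, the un-normalized $\HHH$ versus the framing-corrected $\cP$, and the normalization of $P_{m,n}$ fixed in \cite{gorsky2015refined}. It is the mutual compatibility of these three rescalings, not any structural fact, that must be checked; the overall sign and the absence of residual powers of $q,t$ can be pinned down once and for all by evaluating both sides on the unknot, e.g.\ $m=1$, where $\cP_{1,n}=\tfrac1{1-t}$. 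An alternative, purely combinatorial route bypasses Theorem~\ref{thm: daha} entirely: Theorem~\ref{thm: dyck paths} already expresses $\cP_{m,n}$ as a sum over $(m,n)$-Dyck paths, and the main result of \cite{mellit2016toric} together with \cite{gorsky2015refined} expresses $P_{m,n}(1)[a-1]$ as the same type of sum, so one may instead simply compare the two Dyck-path formulas.
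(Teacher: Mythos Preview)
Your approach is essentially the same as the paper's: the paper presents the corollary as an immediate specialization of Theorem~\ref{thm: daha} to $c=c_{m,n}$, together with the identification of the toric-braid evaluation with the elliptic Hall algebra generator coming from \cite{mellit2016toric} and \cite{gorsky2015refined}, and also notes the alternative route via Theorem~\ref{thm: dyck paths}. You carry out exactly this, supplying the explicit check $k(c_{m,n})=1$, $N(c_{m,n})=n$, and the power-of-$q$ bookkeeping that the paper leaves implicit; your caveat about pinning down the final sign via the unknot is a reasonable way to handle the normalization ambiguity between the various conventions.
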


\section{Concluding remarks}
It is clear that our method and in particular Theorem \ref{thm: dyck paths} can be generalized to torus links (i.e. $(m,n)\neq 1$ case) in a straightforward way, however one has to deal with infinite sums and use Lemma \ref{lem:reduction2} in place of Lemma \ref{lem:reduction1}. In the case of $(n,n)$ links this leads to the combinatorics described in \cite{elias2016computation} and further elaborated in \cite{wilson2016torus}. To obtain the corresponding generalization of Theorem \ref{thm: daha} and Corollary \ref{cor:daha}, we are planning to generalize the representation of the monoid of positive toric braids to the monoid of more general \emph{tangles} in the thickening of the punctured torus. Such a generalization requires a more technical study of skein modules and will be treated in a separate publication.

It would be interesting to try to extend these methods to arbitrary iterated cables, probably satisfying some positivity assumptions. This would probably prove the more general conjectures of \cite{cherednik2014daha}. One would have to deal with links on tori which are themselves embedded in a complicated way in $\RR^3$. Then the transformation rules would somehow move the strands between the tori by dropping loops from more 'deep' tori to the less 'deep' ones, instead of simply contracting them. In a private conversation Vivek Shende suggested that such an extension should be straightforward.

Note that Dyck paths are in bijection with the cells of the corresponding compactified Jacobian of the plane curve singularity (see \cite{gorsky2013compactified}). Our construction, on the other hand, induces a filtration on the Khovanov-Rozansky homology. Moreover, this filtration is also parametrized by Dyck paths. It would be interesting to give a more direct comparison of the two. For instance, let $(m',n')$ be the closest lattice point to the diagonal. This corresponds to the first application of rule BE), so the complex computing Khovanov-Rozansky homology becomes an extension of two complexes. One would expect that the compactified Jacobian has a decomposition into an open subspace and a closed subspace such that the cells corresponding to Dyck paths passing through $(m', n')$ belong to the open subspace, while the remaining ones belong to the closed subspace, or vice versa. This decomposition should induce a filtration on the cohomology of the compactified Jacobian, which should match the filtration on the Khovanov-Rozansky homology.

While this paper was in preparation, I learned that Matt Hogancamp independently made certain progress in some special cases of torus knots and links (see \cite{hogancamp2017khovanov}). It is quite possible that in nice special cases the combinatorics can be dealt with without using the torus projection, like it was done in the $(n,n)$ case in \cite{elias2016computation}.

I thank Eugene Gorsky, Andrei Negut, Alexei Oblomkov, Peter Samuelson and Vivek Shende for useful discussions on the ideas of this paper. In particular, Eugene Gorsky turned my attention to the problem and told me about his and others' conjectures, made many useful comments and pointed out some inaccuracies.

\printbibliography

\end{document}